\author[A.~J.~Di Scala]{Antonio J.~Di Scala$^*$}
\thanks{$^*$A.~J. Di Scala is member of GNSAGA of INdAM and of DISMA Dipartimento di Eccellenza MIUR 2018-2022}
\address{Politecnico di Torino, Dipartimento di Scienze Matematiche, Torino, Italy}
\email{antonio.discala@polito.it}
\author[N.~Murru]{Nadir Murru}
\address{Universit\`a degli Studi di Torino, Department of Mathematics, Torino, Italy}
\email{nadir.murru@unito.it}
\author[C.~Sanna]{Carlo Sanna$^\dagger$}
\thanks{$\dagger\,$C.~Sanna is supported by a postdoctoral fellowship of INdAM and is a member of the INdAM group GNSAGA}
\address{Universit\`a degli Studi di Genova, Department of Mathematics, Genova, Italy}
\email{carlo.sanna.dev@gmail.com}
\keywords{Lucas pseudoprime; Pell conic; Pell pseudoprime; primality test}
\subjclass[2010]{Primary: 11Y11; Secondary: 14H50}
\title{Lucas pseudoprimes and the Pell conic}
\newtheorem{thm}{Theorem}[section]
\newtheorem{lem}[thm]{Lemma}
\theoremstyle{remark}
\newtheorem{rmk}[thm]{Remark}
\newtheorem{exm}[thm]{Example}
\begin{document}

\begin{abstract}
We show a connection between the Lucas pseudoprimes and the Pell conic equipped with the Brahmagupta product introducing the Pell pseudoprimes.
\end{abstract}

\maketitle

\section{Introduction}
Primality testing is a very important topic, especially in cryptography (see, e.g.,~\cite{Yan04} for an overview). 
The most classical primality tests are: Fermat and Euler's test~\cite{Rib04}, Lucas test~\cite{BW80}, Solovay--Strassen primality test~\cite{Sol77}, Rabin--Miller primality test~\cite{Mil76, Rab80}, Baillie--PSW primality test~\cite{BW80}, and AKS primality test~\cite{AKS04}.

The Lucas test is based on some properties of Lucas sequences.
Given two integers $P > 0$ and $Q$ such that $D := P^2 - 4Q \neq 0$, the Lucas sequences $(U_k)_{k \geq 0}$ and $(V_k)_{k \geq 0}$ associated to $(P,Q)$ are defined by
\begin{center}
\begin{minipage}{.3\linewidth}
\begin{align*}
U_0 &= 0,\\
U_1 &= 1, \\
U_k &= P U_{k-1} - Q U_{k-2},
\end{align*}
\end{minipage}%
\begin{minipage}{.3\linewidth}
\begin{align*}
V_0 &= 2,\\
V_1 &= P,\\
V_k &= P V_{k-1} - Q V_{k-2} ,
\end{align*}
\end{minipage}\\[1em]
\end{center}
for every integer $k \geq 2$.
We will write $U_k(P,Q)$ and $V_k(P,Q)$ when it is necessary to show the dependency on $P$ and $Q$.
The Lucas test is based on the fact that when $n$ is an odd prime with $\gcd(n, Q) = 1$, we have
\begin{equation}\label{equ:lucaspseudoprime}
U_{n-\big(\tfrac{D}{n}\big)} \equiv 0 \pmod n ,
\end{equation}
where $\big(\tfrac{D}{n}\big)$ denotes the Jacobi symbol.
If $n$ is composite but~\eqref{equ:lucaspseudoprime} still holds, then $n$ is called a \emph{Lucas pseudoprime} with parameters $P$ and $Q$~\cite{BW80}.
Lucas pseudoprimes have been widely studied~\cite{DF88, GP91, Som09, Suw12}.
Some authors also studied primality tests using more general linear recurrence sequences~\cite{MR866094, Gra10, MR1035934}.

In this paper, we highlight how the Lucas test can be introduced in an equivalent way by means of the Brahmagupta product and the Pell's equation.

The \emph{Pell's equation} is the Diophantine equation
\begin{equation*}
x^2 - D y^2 = 1 ,
\end{equation*}
where $D$ is a fixed squarefree integer.
It is well known that given two solutions $(x_1, y_1)$ and $(x_2, y_2)$ the Brahmagupta product
\begin{equation*}
 (x_1, y_1) \otimes_D (x_2, y_2) := (x_1 x_2 + D y_1 y_2, x_1 y_2 + x_2 y_1)
\end{equation*}
yields another solution of the Pell's equation (see, e.g.,~\cite{Bar03}).
Given a ring $\mathcal R$, we can consider the Pell conic
\begin{equation*}
 \mathcal C = \mathcal{C}_D(\mathcal{R}) := \{ (x,y) \in \mathcal{R} \times \mathcal{R} : x^2 - D y^2 = 1 \} .
\end{equation*}
In particular, if $\mathcal R$ is a field then $(\mathcal C, \otimes_D)$ is a group with identity $(1,0)$.
Moreover, when $\mathcal R = \mathbb Z_p$ (the field of residue classes modulo a prime number $p$), we have $\lvert \mathcal C \rvert = p - \big(\tfrac{D}{p}\big)$ (see, e.g.,~\cite{MV92}).
Consequently, when $n$ is an odd prime, we have
\begin{equation} \label{eq:pell}
(\widetilde x, \widetilde y)^{\otimes n - \big(\tfrac{D}{n}\big)} \equiv (1, 0) \pmod n,
\end{equation}
for all $(\widetilde x, \widetilde y) \in \mathcal C_D(\mathbb{Z}_p)$, where the power is evaluated with respect to $\otimes := \otimes_D$.
We say that an odd composite integer $n$ such that $\gcd(n, \widetilde{y}) = 1$ and
\begin{equation*}
y_n \equiv 0 \pmod n ,
\end{equation*}
where $(x_n, y_n) = (\tilde x, \tilde y)^{\otimes n - (D/n)}$, is a \emph{Pell pseudoprime} with parameters $D$ and $(\widetilde{x}, \widetilde{y}) \in \mathcal{C}_D(\mathbb{Z}_n)$.
The possibility of using the properties of the Pell conic for constructing a primality test has been highlighted in~\cite{Lem03}, but the author does not provide an extensive study about it and only focuses on a conic of the kind $x^2 - Dy^2 = 4$.
Moreover, in~\cite{Ham12} the author used the conic $x^2 + 3 y^2 = 4$ for testing numbers of the form $3^n h \pm 1$.

\begin{rmk}
The term \emph{Pell pseudoprime} is already used for the Lucas pseudoprimes with parameters $P = 2$ and $Q = -1$.
Indeed, in this case, the sequence $U_n$ is known as the Pell sequence (A000129 in OEIS~\cite{Slo}).
Furthermore, the term Pell pseudoprime is also used to indicate a composite integer $n$ such that
\begin{equation*}
 U_n \equiv \left( \cfrac{2}{n} \right) \pmod n
\end{equation*}
for $P = 2$ and $Q = -1$ (A099011 in OEIS).
\end{rmk}

The relation between Lucas pseudoprimes and Pell pseudoprimes is given by the following result.

\begin{thm}\label{thm:main}
On the one hand, if $n$ is a Lucas pseudoprime with parameters $P > 0$ and $Q = 1$, then $n$ is a Pell pseudoprime with parameters $D = P^2 - 4$ and $(\widetilde x, \widetilde y) \equiv (2^{-1}P, 2^{-1}) \pmod n$. 
On the other hand, if $n$ is a Pell pseudoprime with parameter $D$ and $(\widetilde x, \widetilde y)$, then $n$ is a Lucas pseudoprime with parameters $P = 2 \widetilde x$, and $Q = 1$.
\end{thm}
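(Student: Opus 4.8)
The plan is to prove both directions at once by setting up a dictionary between the Brahmagupta powers of a point of $\mathcal C_D(\mathbb Z_n)$ and the Lucas sequences with $Q=1$.

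First I would record the following elementary fact. Fix an odd $n$ and a point $(\widetilde x,\widetilde y)\in\mathcal C_D(\mathbb Z_n)$, and write $(x_k,y_k):=(\widetilde x,\widetilde y)^{\otimes k}$. Because $\widetilde x^2-D\widetilde y^2\equiv 1\pmod n$, the point $(\widetilde x,-\widetilde y)$ is the $\otimes$-inverse of $(\widetilde x,\widetilde y)$; expanding the products in $(\widetilde x,\widetilde y)^{\otimes(k+1)}=(\widetilde x,\widetilde y)\otimes(x_k,y_k)$ and $(\widetilde x,\widetilde y)^{\otimes(k-1)}=(\widetilde x,-\widetilde y)\otimes(x_k,y_k)$ and adding them, both sequences $(x_k)_{k\ge0}$ and $(y_k)_{k\ge0}$ obey $t_{k+1}=2\widetilde x\,t_k-t_{k-1}$, i.e. the Lucas recurrence for $(P,Q)=(2\widetilde x,1)$. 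Matching the two initial values ($x_0=1$, $x_1=\widetilde x$ and $y_0=0$, $y_1=\widetilde y$) against $V_k=V_k(2\widetilde x,1)$ and $U_k=U_k(2\widetilde x,1)$, and using that a second-order linear recurrence over $\mathbb Z_n$ is determined by its first two terms, this yields
$$x_k\equiv 2^{-1}V_k\pmod n,\qquad y_k\equiv\widetilde y\,U_k\pmod n\qquad(k\ge0).$$

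With this dictionary the first implication is short. If $n$ is a Lucas pseudoprime with $P>0$, $Q=1$, $D=P^2-4$, then $n$ is odd and composite and $U_{n-(D/n)}\equiv0\pmod n$; put $(\widetilde x,\widetilde y):=(2^{-1}P,2^{-1})\bmod n$. A one-line computation gives $(\widetilde x,\widetilde y)\in\mathcal C_D(\mathbb Z_n)$ and $2\widetilde x\equiv P$, so the dictionary says the second coordinate of $(\widetilde x,\widetilde y)^{\otimes(n-(D/n))}$ is $\widetilde y\,U_{n-(D/n)}\equiv0\pmod n$, while $\gcd(n,\widetilde y)=\gcd(n,2^{-1})=1$ since $2^{-1}$ is a unit; hence $n$ is a Pell pseudoprime with $D$ and $(\widetilde x,\widetilde y)$. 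Conversely, if $n$ is a Pell pseudoprime with $D$ and $(\widetilde x,\widetilde y)$, set $P:=2\widetilde x$ (a positive representative) and $Q:=1$, with associated Lucas discriminant $\Delta:=P^2-4Q=4(\widetilde x^2-1)$. The dictionary turns the hypothesis $y_{n-(D/n)}\equiv0\pmod n$ into $\widetilde y\,U_{n-(D/n)}\equiv0\pmod n$, and since $\widetilde y$ is a unit, $U_{n-(D/n)}\equiv0\pmod n$. Finally, from $\widetilde x^2-D\widetilde y^2\equiv1\pmod n$ one has $\Delta\equiv4D\widetilde y^2\pmod n$, so by multiplicativity of the Jacobi symbol and $\gcd(n,2\widetilde y)=1$,
$$\left(\tfrac{\Delta}{n}\right)=\left(\tfrac{4}{n}\right)\left(\tfrac{D}{n}\right)\left(\tfrac{\widetilde y^{2}}{n}\right)=\left(\tfrac{D}{n}\right),$$
whence $U_{n-(\Delta/n)}=U_{n-(D/n)}\equiv0\pmod n$ and $n$ is a Lucas pseudoprime with $P=2\widetilde x$, $Q=1$.

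The recurrence identification is routine. The step I expect to need the most care is the last Jacobi-symbol manipulation, which works only because $\widetilde y$ is invertible modulo $n$ and $\Delta/D\equiv(2\widetilde y)^2$ is a square there; I would also want to double-check the small points — that $n$ being odd makes $2$ invertible, that $P=2\widetilde x$ can be taken positive (treating the degenerate cases $\widetilde x\equiv\pm1$, i.e. $D\equiv0\pmod n$, separately), and that the ambient definition of Lucas pseudoprime imposes no coprimality condition beyond $\gcd(n,Q)=1$, which here is automatic since $Q=1$.
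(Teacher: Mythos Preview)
Your proof is correct and follows the same architecture as the paper's: establish the identity $(\widetilde x,\widetilde y)^{\otimes k}\equiv(\tfrac12 V_k,\widetilde y\,U_k)$ for $(P,Q)=(2\widetilde x,1)$, then read off both implications. The differences are in execution. The paper proves this dictionary (its Lemma~2.1) via Binet's formulas, writing $(\widetilde x+\sqrt{D}\,\widetilde y)^k=x_k+\sqrt{D}\,y_k$ and matching against $V_k=\alpha^k+\beta^k$, $U_k=(\alpha^k-\beta^k)/(\alpha-\beta)$; you instead derive the second-order recurrence for $(x_k)$ and $(y_k)$ directly from the group law and match initial conditions, which is slightly more elementary and works natively in $\mathbb Z_n$ without worrying whether $\widetilde x^2-D\widetilde y^2$ equals $1$ in $\mathbb Z$ or only modulo $n$. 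More notably, in the Pell-to-Lucas direction you explicitly verify $\bigl(\tfrac{P^2-4}{n}\bigr)=\bigl(\tfrac{D}{n}\bigr)$ via $P^2-4\equiv 4D\widetilde y^{\,2}\pmod n$ and the invertibility of $2\widetilde y$; the paper simply applies its lemma with $k=n-(D\mid n)$ and concludes without separating the Lucas discriminant $P^2-4$ from the Pell parameter $D$, so on this point your argument is in fact more careful than the paper's own.
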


\section{Proof of Theorem~\ref{thm:main}}

\begin{lem}\label{lem:bra}
Let $\widetilde{x}, \widetilde{y} \in \mathbb{Z}$ and let $D$ be a nonzero integer.
We have
\begin{equation*}
(\widetilde{x}, \widetilde{y})^{\otimes k} = \big(\tfrac1{2}V_k(P, Q), \widetilde{y}\, U_k(P,Q)\big) ,
\end{equation*}
for every integer $k \geq 0$, where $P := 2\widetilde{x}$, $Q := \widetilde{x}^2 - D\widetilde{y}^2$, and the Brahmagupta product is computed respect to $D$.
\end{lem}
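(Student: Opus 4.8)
The plan is to argue by induction on $k$, using that $\otimes_D$ is associative and has identity $(1,0)$ — indeed $(x,y)\mapsto x+y\sqrt{D}$ carries $\otimes_D$ to multiplication in $\mathbb{Z}[\sqrt{D}]$ — so that $(\widetilde x,\widetilde y)^{\otimes(k+1)} = (\widetilde x,\widetilde y)^{\otimes k}\otimes_D(\widetilde x,\widetilde y)$ for all $k\geq 0$. For the base cases: $(\widetilde x,\widetilde y)^{\otimes 0} = (1,0) = \bigl(\tfrac12 V_0,\widetilde y U_0\bigr)$ since $V_0 = 2$ and $U_0 = 0$, and $(\widetilde x,\widetilde y)^{\otimes 1} = (\widetilde x,\widetilde y) = \bigl(\tfrac12 P,\widetilde y\bigr) = \bigl(\tfrac12 V_1,\widetilde y U_1\bigr)$ since $P = 2\widetilde x$.

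For the inductive step, assume $(\widetilde x,\widetilde y)^{\otimes k} = \bigl(\tfrac12 V_k,\widetilde y U_k\bigr)$. Expanding the Brahmagupta product,
\[
(\widetilde x,\widetilde y)^{\otimes(k+1)} = \bigl(\tfrac12 V_k,\widetilde y U_k\bigr)\otimes_D(\widetilde x,\widetilde y) = \Bigl(\tfrac12\widetilde x V_k + D\widetilde y^{2}U_k,\ \widetilde y\bigl(\tfrac12 V_k + \widetilde x U_k\bigr)\Bigr),
\]
so it remains to verify $\tfrac12\widetilde x V_k + D\widetilde y^{2}U_k = \tfrac12 V_{k+1}$ and $\tfrac12 V_k + \widetilde x U_k = U_{k+1}$. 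For this I would invoke the two classical Lucas identities $2U_{k+1} = PU_k + V_k$ and $2V_{k+1} = PV_k + (P^2-4Q)U_k$, each of which follows from the Binet formulas (or from a short secondary induction on the defining recurrences). The point to notice first is that for the pair $(P,Q) = \bigl(2\widetilde x,\ \widetilde x^{2}-D\widetilde y^{2}\bigr)$ the discriminant is $P^2-4Q = 4D\widetilde y^{2}$; substituting $P = 2\widetilde x$ and $P^2-4Q = 4D\widetilde y^{2}$ into the two identities turns them, after dividing by $2$ or $4$, into precisely the two equalities required, which closes the induction.

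The only genuinely delicate point — and where a hasty computation would go wrong — is that two different discriminant-type quantities are at play: the Brahmagupta product is taken with respect to the ambient $D$, while the Lucas sequences $U_k$, $V_k$ implicitly carry the discriminant $P^2-4Q = 4D\widetilde y^{2}$. Tracking which one appears where is exactly what places the factors $\widetilde y^{2}$ and $\tfrac12$ in the correct slots; everything else is a one-line verification. (Alternatively, one can sidestep the induction entirely: with $\theta := \widetilde x + \widetilde y\sqrt D$, the conjugate $\overline{\theta} = \widetilde x - \widetilde y\sqrt D$ is the second root of $T^{2}-PT+Q$, so $\theta^{k}+\overline{\theta}^{\,k} = V_k$ and $\theta^{k}-\overline{\theta}^{\,k} = (\theta-\overline{\theta})U_k = 2\widetilde y\sqrt D\,U_k$; adding gives $\theta^{k} = \tfrac12 V_k + \widetilde y U_k\sqrt D$, which is the claimed identity read through $(x,y)\mapsto x+y\sqrt D$ — and one may work in $\mathbb{Q}[t]/(t^{2}-D)$ to accommodate the cases where $D$ is a perfect square or $\widetilde y = 0$.)
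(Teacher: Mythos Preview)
Your proof is correct. The paper, however, takes exactly the route you relegate to a parenthetical alternative: it identifies $(\widetilde x,\widetilde y)^{\otimes k}$ with the real and $\sqrt D$-parts of $(\widetilde x+\widetilde y\sqrt D)^k$, observes that $\alpha=\widetilde x+\widetilde y\sqrt D$ and $\beta=\widetilde x-\widetilde y\sqrt D$ are the roots of $X^2-PX+Q$, and reads off $x_k=\tfrac12 V_k$ and $y_k=\widetilde y\,U_k$ directly from the Binet formulas. Your main argument instead proceeds by induction, reducing the step to the classical identities $2U_{k+1}=PU_k+V_k$ and $2V_{k+1}=PV_k+(P^2-4Q)U_k$ together with the observation $P^2-4Q=4D\widetilde y^{\,2}$. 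The inductive route is slightly longer but has the virtue of staying entirely within recurrence-algebra and never invoking $\sqrt D$, so no care is needed for the degenerate cases ($D$ a perfect square or $\widetilde y=0$) that you flag at the end; the paper's Binet argument is shorter but tacitly assumes $\alpha\neq\beta$ and that $\sqrt D$ makes sense, which is fine here since $D\neq 0$ and one may work formally in $\mathbb Q[t]/(t^2-D)$ as you note.
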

\begin{proof}
It is clear from the definition of Brahmagupta product that $(\widetilde{x}, \widetilde{y})^{\otimes k} = (x_k, y_k)$, where $x_k, y_k \in \mathbb{Z}$ are defined by $(\widetilde{x} + \sqrt{D}\widetilde{y})^k = x_k + \sqrt{D} y_k$.
Conjugating this last equality we get $(\widetilde{x} - \sqrt{D}\widetilde{y})^k = x_k - \sqrt{D} y_k$, from which in turn we obtain
\begin{equation*}
x_k = \frac{(\widetilde{x} + \sqrt{D}\widetilde{y})^k + (\widetilde{x} - \sqrt{D}\widetilde{y})^k}{2} \quad\text{ and}\quad y_k = \frac{(\widetilde{x} + \sqrt{D}\widetilde{y})^k - (\widetilde{x} - \sqrt{D}\widetilde{y})^k}{2\sqrt{D}} .
\end{equation*}
It is well known~\cite[Ch.~1]{MR1761897} that 
\begin{equation*}
V_k(P, Q) = \alpha^k + \beta^k \quad\text{ and }\quad U_k = \frac{\alpha^k - \beta^k}{\alpha - \beta} ,
\end{equation*}
where $\alpha, \beta$ are the roots of $X^2 - PX + Q$.
Since $P := 2\widetilde{x}$ and $Q := \widetilde{x}^2 - D\widetilde{y}^2$, we have $\alpha = \widetilde{x} + \sqrt{D}\widetilde{y}$ and $\beta = \widetilde{x} - \sqrt{D}\widetilde{y}$, so that $x_k = \tfrac1{2}V_k(P,Q)$ and $y_k = \widetilde{y} U_k(P, Q)$, as desired.
\end{proof}

Suppose that $n$ is a Lucas pseudoprime with parameters $P > 0$ and $Q = 1$.
Let $\widetilde{x}, \widetilde{y} \in \mathbb{Z}$ be such that $(\widetilde{x}, \widetilde{y}) \equiv (2^{-1} P, 2^{-1}) \pmod n$ and put $D := P^2 - 4$.
We have
\begin{equation*}
\widetilde{x}^2 - D\widetilde{y}^2 \equiv (2^{-1}P)^2 - (P^2 - 4)2^{-2} \equiv 1 \pmod n ,
\end{equation*}
so that $(\widetilde{x}, \widetilde{y}) \in \mathcal{C}_D(\mathbb{Z}_n)$.
Moreover, by Lemma~\ref{lem:bra} with $k = n - (D|n)$ and since $n$ is a Lucas pseudoprime, we have
\begin{equation*}
(\widetilde{x}, \widetilde{y})^{\otimes k} = \big(\tfrac1{2}V_k(P, Q), \widetilde{y}\, U_k(P,Q)\big) \equiv \big(\tfrac1{2}V_k(P, Q), 0\big) \pmod n .
\end{equation*}
Hence, $n$ is a Pell pseudoprime with parameters $D = P^2 - 4$ and $(\widetilde{x}, \widetilde{y}) \equiv (2^{-1}P, 2^{-1}) \pmod n$.

Now suppose that $n$ is Pell pseudoprime with parameters $D$ and $(\widetilde{x}, \widetilde{y}) \in \mathcal{C}$.
Let $P = 2\widetilde{x}$ and $Q = 1$.
Note that since $n$ is Pell pseudoprime, by definition we have $\gcd(\widetilde{y}, n) = 1$.

By Lemma~\ref{lem:bra} with $k = n - (D|n)$ and since $n$ is a Pell pseudoprime, we have
\begin{equation*}
\big(\tfrac1{2}V_k(P, Q), \widetilde{y}\, U_k(P,Q)\big) \equiv (\widetilde{x}, \widetilde{y})^{\otimes k} ,
\end{equation*}
so that $U_k(P,Q) \equiv 0 \pmod n$.
Hence, $n$ is a Lucas pseudoprime with paraments $P = 2\widetilde{x}$ and $Q = 1$.

\section{Further remarks}

Let us note that, fixed the parameters $P$ and $Q = 1$ for the Lucas test (for checking, e.g., the primality of all the integers in a certain range), there is not a corresponding Pell test with fixed parameters $D$, $\widetilde{x}$ and $\widetilde{y}$ as integer numbers.
Indeed, given any $P$ and $Q = 1$, we have seen that $D = P^2 - 4$ and $(\widetilde{x}, \widetilde{y}) \equiv (2^{-1}P, 2^{-1}) \pmod n$ are the corresponding parameters of the Pell test, but these values depend on the integer $n$ we are testing.

Moreover, in general, we are not able to fix the integer parameters $D, \widetilde{x}, \widetilde{y}$ in the Pell test for checking the primality of all the integers in a given range, because it is necessary that $\widetilde{x}^2 - D \widetilde{y}^2 \equiv 1 \pmod n$ and this can not be true for any integer $n$.
For overcoming these issues, the use of a parametrization of the conic $\mathcal C$ can be helpful.
In~\cite{BM16}, the authors provided the following map
\begin{equation*}
 \Phi : \begin{cases} \mathcal R \cup \{\alpha\} \rightarrow \mathcal C \cr a \mapsto \left( \cfrac{a^2 + D}{a^2 - D}, \cfrac{2 a}{a^2 - D} \right) \end{cases}
\end{equation*}
where $\alpha \not \in \mathcal R$ is the point at the infinity of such a parametrization of $\mathcal C$.
When $\mathcal R$ is a field and $t^2 - D$ is irreducible in $\mathcal R$, the map is always defined, otherwise there are values of $a$ such that $\Phi(a)$ can not be evaluated.
In this way, we can consider the Pell test with fixed parameters $D$ and $a$, in the sense that $\widetilde{x} = (a^2 + D)/(a^2 - D)$ and $\widetilde{y} = 2 a/(a^2 - D)$.
However, given a Pell test with parameters $D$ and $a$, there is not always a corresponding Lucas test with fixed parameters $P$ and $Q = 1$ as integer numbers.
Indeed, the correspondence is given by considering $P = 2 \widetilde{x}$.
We see some examples for clarifying these considerations.

\begin{exm} \label{es:1}
Fixed $P = 3$ and $Q = 1$, the first Lucas pseudoprime is 21, indeed we have $\left( \cfrac{5}{21} \right) = 1$ and
\begin{equation*}
 U_{20} = 102334155 \equiv 0 \pmod{21}.
\end{equation*}
It is also a Pell pseudoprime for $D = P^2 - 4 = 5$, $\widetilde{x} = P/2 \pmod {21} = 12$, $\widetilde{y} = 1/2 \pmod {21} = 11$, indeed
\begin{equation*}
 (12, 11)^{\otimes 20} \equiv (13, 0) \pmod{21}.
\end{equation*}
The second Lucas pseudoprime, in this case, is 323 and it is a Pell pseudoprime for $D = P^2 - 4 = 5$, $\widetilde{x} = P/2 \pmod 323 = 163$, $\widetilde{y} = 1/2 \pmod 323 = 162$, which are different from the previous parameters (the point $(163, 162)$ does not belong to $\mathcal C$ for $D = 5$ and $R = \mathbb Z_{21}$).
\end{exm}

\begin{exm}
If we consider the parameters $D = 3$, $\widetilde{x} = 8$, $\widetilde{y} = 66$, in the interval $[1, 100]$, we can only test the integers $3, 5, 9, 15, 17, 45, 51, 85$, since for the other integers $m \in [1, 100]$ the oint $(8, 65)$ does not belong to $\mathcal C$ for $D = 5$ and $R = \mathbb Z_{m}$. For instance, we can test the integer $n = 85$ and observing that it is a Pell pseudoprime in this case, consequently it is a Lucas pseudoprime for $P = 16$ and $Q = 1$.
Let us note that $\left( \cfrac{D}{n} \right) = \left( \cfrac{3}{85} \right) = 1$ and $\left( \cfrac{P^2 - 4 Q}{n} \right) = \left( \cfrac{252}{85} \right) = 1$. \\
On the other hand, we can test the integer $85$ with the Pell using different parameters, e.g., $D = 3$, $x_1 = 7$, $y_1 = 4$ (being $(7, 4) \in \mathcal C$ in this case) and we have
\begin{equation*}
(7, 4)^{\otimes 84} \equiv (76, 15) \pmod{85},
\end{equation*}
i.e., $n$ is not a Pell pseudoprime. The corresponding Lucas test is given by $P = 14$ and $Q = 1$ and we get that 85 in not a Lucas pseudoprime, since 
\begin{equation*}
U_{84} \equiv 25 \pmod 85.
\end{equation*}

\end{exm}

\begin{exm} \label{exm:P-even}
Given $P = 4$ and $Q = 1$, the Lucas pseudoprimes up to 5000 are
\begin{equation*}
 65, 209, 629, 679, 901, 989, 1241, 1769, 1961, 1991, 2509, 2701, 2911, 3007, 3439, 3869.
\end{equation*}
When $P$ is even, we are always able to find an equivalent Pell test, providing all the same pseudoprimes of the Lucas test.
Indeed, it is sufficient to choice $D$ and $a$ such that $(a^2 + D) / (a^2 - D)$ is the integer number $P/2$.
For instance in this case, taking $D = 3$ and $a = 3$, we have $\widetilde{x} = 2$ and $\widetilde{y} = 1$.
\end{exm}

\begin{exm}
Given $P = 3$ and $Q = 1$ the Lucas pseudoprimes up to 5000 are
\begin{equation*}
 21, 323, 329, 377, 451, 861, 1081, 1819, 2033, 2211, 3653, 3827, 4089, 4181.
\end{equation*}
Also for $P$ odd, we are always able to find an equivalent Pell test exploiting the above parametrization.
In this case, we search for $a$ and $D$ integers such that $(a^2 + D) / (a^2 - D)$ is equal to the fraction $P/2$.
For instance, considering $D = 5$ and $a = 5$, we have $\widetilde{x} = 3/2$ and $\widetilde{y} = 1/2$. Let us note that in this case the values of $\widetilde{x}$ and $\widetilde{y}$ will be different as integer numbers, depending on the integer we are testing.
\end{exm}

\begin{exm}
Let us see some Pell tests that do not have an equivalent Lucas test with fixed integer parameters.
Given $D = 6$ and $a = 4$, the Pell pseudoprimes up to 3000 are
\begin{equation*}
 77, 187, 217, 323, 341, 377, 1763, 2387,
\end{equation*}
for this test we have $\widetilde{x} = 11/5$ and $\widetilde{y} = 4/5$.

Given $D = 23$ and $a = 32$, the Pell pseudoprimes up to 3000 are
\begin{equation*}
323, 1047, 1247, 1745, 2813,
\end{equation*}
for this test we have $\widetilde{x} = 1047/1001$ and $\widetilde{y} = 64/1001$.

Given $D = 21$ and $a = 49$, the Pell pseudoprimes up to 3000 are
\begin{equation*}
253, 473, 779, 2627, 2641,
\end{equation*}
for this test we have $\widetilde{x} = 173/170$ and $\widetilde{y} = 7/170$.

Given $D = 29$ and $a = 48$, the Pell pseudoprimes up to 3000 are
\begin{equation*}
989, 1101, 1457, 1991, 2449, 2679
\end{equation*}
for this test we have $\widetilde{x} = 2333/2275$ and $\widetilde{y} = 96/2275$.

\end{exm}

\begin{rmk}
The Lucas test with parameters $P$ and $Q = 1$ is equivalent to the Pell test with parameters $D = P^2 - 4$ and $a = P + 2$.
Indeed, in this case, exploiting the parametrization $\Phi$, we get $x_1 = P/2$ and $y_1 = 1/2$.
Note that using this method, the Pell test equivalent to the Lucas test considered in Example \ref{exm:P-even} has parameters $D = 12$ and $a = 6$. This means that there are Pell tests with different parameters which are equivalent to each others.
\end{rmk}

\begin{rmk}
Considering the identity \eqref{eq:pell}, it is possible to define a stronger test. Indeed some Pell pseudoprimes may not satisfy \eqref{eq:pell} as in example \eqref{es:1} for the Pell pseduoprime 21. The test determined by \eqref{eq:pell} has an equivalent formulation by means of the Lucas sequences. In this case, we can define a pseudoprime as an odd integer $n$ such that
\begin{equation*}
U_{n-\big(\tfrac{D}{n}\big)} \equiv 0 \pmod n \quad \text{and} \quad U_{n-\big(\tfrac{D}{n}\big) + 1} \equiv 1 \pmod n
\end{equation*}
where as usual $D = P^2 - 4 Q$ and $P, Q$ parameters that define the Lucas sequence. This test does not appear in literature with a specific name, but when $D$ is chosen with the Selfridge method~\cite{BW80}, the sequence of pseudoprimes coincides with the Frobenius pseudoprimes with respect to the Fibonacci polynomial~\cite{Gra01}.
\end{rmk}

%
%
%
%

\end{document}